\newcommand{\hr} {{{}^{\mathfrak{h}}\hspace*{-0.4pt}\R}}
\newtheorem{theorem}{Theorem}[section]
\theoremstyle{definition}
\numberwithin{equation}{section}
\newcommand\N {{\mathbb N}}
\newcommand\R {{\mathbb R}} 
\newcommand\Q {{\mathbb Q}} 
\newcommand\astr {{}^{\ast}\hspace*{-.7pt}{\mathbb R}}
\newcommand\astf{{}^{\ast}\hspace{-3pt}f}
\newcommand\astn{{{}^{\ast}\N}}
\newcommand\st{\textbf{st}}
\newcommand\Los{{\L}o{\'s}}
\author[V. Kanovei]{Vladimir Kanovei} \address{V. Kanovei, IPPI,
Moscow, and MIIT, Moscow, Russia}\email{kanovei@googlemail.com}
\author[K. Katz]{Karin U. Katz}\address{K. Katz, Department of
Mathematics, Bar Ilan University, Ramat Gan 52900
Israel}\email{katzmik@math.biu.ac.il}
\author[M. Katz]{Mikhail G. Katz}\address{M. Katz, Department of
Mathematics, Bar Ilan University, Ramat Gan 52900
Israel}\email{katzmik@macs.biu.ac.il}
\author[T. Mormann]{Thomas Mormann}\address{T. Mormann, Department of
Logic and Philosophy of Science, University of the Basque Country
UPV/EHU, 20080 Donostia San Sebastian, Spain}
\email{ylxmomot@sf.ehu.es}
\begin{document}

\thispagestyle{empty}

\title[What makes a theory of infinitesimals useful?]  {What makes a
theory of infinitesimals useful?  A view by Klein and Fraenkel}

\subjclass[2000]{Primary 26E35; 
Secondary 03A05
}

\keywords{infinitesimal; Felix Klein; Abraham Fraenkel; hyperreal;
Mean Value Theorem}

\begin{abstract}
Felix Klein and Abraham Fraenkel each formulated a criterion for a
theory of infinitesimals to be successful, in terms of the feasibility
of implementation of the Mean Value Theorem.  We explore the evolution
of the idea over the past century, and the role of Abraham Robinson's
framework therein.
\end{abstract}

\maketitle


\section{Introduction}
\label{s1}

Historians often take for granted a historical continuity between the
calculus and analysis as practiced by the 17--19th century authors, on
the one hand, and the arithmetic foundation for classical analysis as
developed starting with the work of Cantor, Dedekind, and Weierstrass
around 1870, on the other.

We extend this continuity view by exploiting the Mean Value Theorem
(MVT) as a case study to argue that Abraham Robinson's framework for
analysis with infinitesimals constituted a continuous extension of the
procedures of the historical infinitesimal calculus.  Moreover,
Robinson's framework provided specific answers to traditional
preoccupations, as expressed by Klein and Fraenkel, as to the
applicability of rigorous infinitesimals in calculus and analysis.

This paper is meant as a modest contribution to the prehistory of
Robinson's framework for infinitesimal analysis.  To comment briefly
on a broader picture, in a separate article by Bair et al.\;\cite{17a}
we address the concerns of those scholars who feel that insofar as
Robinson's framework relies on the resources of a logical framework
that bears little resemblance to the frameworks that gave rise to the
early theories of infinitesimals, Robinson's framework has little
bearing on the latter.  Such a view suffers from at least two
misconceptions.  First, a hyperreal extension results from an
ultrapower construction exploiting nothing more than the resources of
a serious undergraduate algebra course, namely the existence of a
maximal ideal (see Section~\ref{s5}).  Furthermore, the issue of the
\emph{ontological} justification of infinitesimals in a set-theoretic
framework has to be distinguished carefully from the issue of the
\emph{procedures} of the early calculus which arguably find better
proxies in modern infinitesimal theories than in a Weierstrassian
framework; see further in B\l aszczyk et al.\;\cite{17d}.  For an
analysis of Klein's role in modern mathematics see Bair et
al.\;\cite{18a}.  For an overview of recent developments in the
history of infinitesimal analysis see Bascelli et al.\;\cite{14a}.

\section{Felix Klein}

In 1908, Felix Klein formulated a criterion of what it would take for
a theory of infinitesimals to be successful.  Namely, one must be able
to prove an MVT for arbitrary intervals (including infinitesimal
ones).  Writes Klein: ``there was lacking a method for estimating
\ldots the increment of the function in the finite interval.  This was
supplied by the \emph{mean value theorem}; and it was Cauchy's great
service to have recognized its fundamental importance and to have made
it the starting point accordingly of differential calculus''
\cite[p.\;213]{Kl08}.  A few pages later, Klein continues:
\begin{quote}
The question naturally arises whether \ldots{} it would be possible to
\emph{modify} the traditional foundations of infinitesimal calculus,
so as to include actually \emph{infinitely small} quantities in a way
that would satisfy modern demands as to rigor; in other words, to
construct a non-Archime\-dean system.  The first and chief problem of
this analysis would be to prove the mean-value theorem
\[
f(x+h)-f(x)=h \cdot f'(x+\vartheta h)
\]
[where $0\leq\vartheta\leq1$] from the assumed axioms.  I will not say
that progress in this direction is impossible, but it is true that
none of the investigators have achieved anything positive.
\cite[p.~219]{Kl08} (emphasis added)
\end{quote}
See also Kanovei et al.\;\cite[Section\;6.1]{13c}.  Klein's sentiment
that the axioms of the traditional foundations need to be modified in
order to accommodate a true infinitesimal calculus were right on
target.  Thus, Dedekind completeness needs to be relaxed; see
Section~\ref{s52}.

The MVT was still considered a research topic in Felix Klein's
lifetime.  Thus, in 1884 a controversy opposed Giuseppe Peano and
Louis-Philippe Gilbert concerning the validity of a proof of MVT given
by Camille Jordan; see Luciano \cite{Lu07}, Mawhin \cite{Ma11},
Besenyei \cite{Be12}, Smory\'nski \cite{Sm17} for details.

\section{Abraham Fraenkel}
\label{s3}

Robinson noted in his book that in 1928, Abraham Fraenkel formulated a
criterion similar to Klein's, in terms of the MVT.\; Robinson first
mentions the philosopher Paul Natorp of the Marburg school: ``during
the period under consideration attempts were still being made to
define or justify the use of infinitesimals in Analysis (e.g. Geissler
[1904], Natorp [1923])'' \cite [p.\;278] {Ro66}.  Robinson goes on to
reproduce a lengthy comment from Abraham Fraenkel's 1928 book
\cite[pp.\;116--117]{Fran} in German.  We provide a translation of
Fraenkel's comment:
\begin{quote}
\ldots With respect to this test the infinitesimal is a complete
failure.  The various kinds of infinitesimals that have been taken
into account so far and sometimes have been meticulously argued for,
have contributed nothing to cope with even the simplest and most basic
problems of the calculus. For instance, for [1] a proof of the mean
value theorem or for [2] the definition of the definite
integral. \ldots{} There is no reason to expect that this will change
in the future.''  (Fraenkel as quoted in Robinson \cite[p.~279]{Ro66};
translation ours; numerals [1] and [2] added)
\end{quote}
Thus Fraenkel formulates a pair of requirements: [1] the MVT and [2]
definition of the definite integral.  Fraenkel then offers the
following glimmer of hope:
\begin{quote}
Certainly, it would be thinkable (although for good reasons rather
improbable and, at the present state of science, situated at an
\emph{unreachable distance} [in the future]) that a second Cantor
would give an impeccable arithmetical foundation of new infinitely
small number that would turn out to be mathematically useful, offering
perhaps an easy access to infinitesimal calculus. 
(ibid., emphasis added)
\end{quote}
Note that Fraenkel places such progress at unreachable distance in the
future.  

This is perhaps understandable if one realizes that
Cantor--Dedekind--Weierstrass foundations, formalized in the
Zermelo--Fraenkel (the same Fraenkel) set-theoretic foundations, were
still thought at the time to be a primary point of reference for
mathematics (see Section~\ref{s1}).  Concludes Fraenkel: 
\begin{quote}
But as long this is not the case, it is not allowed to draw a parallel
between the certainly interesting numbers of Veronese and other
infinitely small numbers on the one hand, and Cantor's numbers, on the
other. Rather, one has to maintain the position that one cannot speak
of the mathematical and therefore logical existence of the infinitely
small in the same or similar manner as one can speak of the infinitely
large.%
\footnote{The infinities Fraenkel has in mind here are Cantorian
infinities.}
(ibid.)
\end{quote}
An even more pessimistic version of Fraenkel's comment appeared a
quarter-century later in his 1953 book \emph{Abstract Set Theory},
with MVT replaced by Rolle's theorem \cite[p.\;165]{Fr53}.

\section{Modern infinitesimals}

Fraenkel's 1953 assessment of ``unreachable distance''
notwithstanding, only two years later Jerzy \Los{} in \cite{Lo55}
(combined with the earlier work by Edwin Hewitt \cite{He48} in 1948)
established the basic framework satisfying the Klein--Fraenkel
requirements, as Abraham Robinson realized in 1961; see \cite{Ro61}.
The third, 1966 edition of Fraenkel's \emph{Abstract Set Theory} makes
note of these developments:
\begin{quote}
Recently an \emph{unexpected} use of infinitely small magnitudes, in
particular a method of basing analysis (calculus) on infinitesimals,
has become possible and important by means of a non-archimedean,
non-standard, proper extension of the field of the real numbers.  For
this surprising development the reader is referred to the literature.
\cite[p.\;125]{Fr66} (emphasis added)
\end{quote}
Fraenkel's use of the adjective \emph{unexpected} is worth commenting
on at least briefly.  Surely part of the surprise is a foundational
challenge posed by modern infinitesimal theories.  Such theories
called into question the assumption that the
Cantor--Dedekind--Weierstrass foundations are an inevitable
\emph{primary} point of reference, and opened the field to other
possibilities, such as the IST enrichment of ZFC developed by Edward
Nelson; for further discussion see Katz--Kutateladze \cite{15c} and
Fletcher et al.\;\cite{17f}.

This comment of Fraenkel's is followed by a footnote citing Robinson,
Laugwitz, and Luxemburg.  Fraenkel's appreciation of Robinson's theory
is on record:
\begin{quote}
my former student Abraham Robinson had succeeded in saving the honour
of infinitesimals - although in quite a different way than Cohen and
his school had imagined.  \cite{Fra67} (cf.\;\cite[p.\;85]{Fr16})
\end{quote}
Here Fraenkel is referring to Hermann Cohen (1842--1918), whose
fascination with infinitesimals elicited fierce criticism by both
Georg Cantor and Bertrand Russell.  For an analysis of Russell's
critique see Katz--Sherry \cite[Section\;11.1]{13f}.  For more details
on Cohen, Natorp, and Marburg neo-Kantianism, see Mormann--Katz
\cite{13h}.

\section{A criterion}
\label{s5}

Both Klein and Fraenkel formulated a criterion for the usefulness of a
theory of infinitesimals in terms of being able to prove a mean value
theorem.  Such a Klein--Fraenkel criterion is satisfied by the
framework developed by Hewitt, \Los, Robinson, and others.  Indeed,
the MVT
\[
(\forall x\in\R)(\forall h\in\R) (\exists \vartheta\in\R)
\big(f(x+h)-f(x)=h \cdot g(x+\vartheta h) \big)
\]
where $g(x)=f'(x)$ and $\vartheta\in[0,1]$, holds also for the natural
extension~$\astf$ of every real smooth function~$f$ on an arbitrary
hyperreal interval, by the \emph{Transfer Principle}; see
Section~\ref{s51}.  Thus we obtain the formula
\[
(\forall x\in\astr)(\forall h\in\astr) (\exists \vartheta\in\astr)
\big(\astf(x+h)-\astf(x)=h \cdot {}^\ast\hspace{-3pt}g(x+\vartheta h)
\big),
\]
valid in particular for infinitesimal $h$.  

\subsection{Transfer}
\label{s51}

The \emph{Transfer Principle} is a type of theorem that, depending on
the context, asserts that rules, laws or procedures valid for a
certain number system, still apply (i.e., are ``transfered'') to an
extended number system.  In this sense it is a formalisation of the
Leibnizian \emph{Law of Continuity}; such a connection is explored in
Katz--Sherry\;\cite{13f}.

Thus, the familiar extension~$\Q\hookrightarrow\R$ preserves the
property of being an ordered field.  To give a negative example, the
extension~$\R\hookrightarrow\R\cup\{\pm\infty\}$ of the real numbers
to the so-called \emph{extended reals} does not preserve the field
properties.  The hyperreal extension~$\R\hookrightarrow\astr$ (see
Section~\ref{s52}) preserves \emph{all} first-order properties.  The
result in essence goes back to \Los{} \cite{Lo55}.  For example, the
identity~$\sin^2 x+\cos^2 x=1$ remains valid for all hyperreal~$x$,
including infinitesimal and infinite inputs~$x\in\astr$.  Another
example of a transferable property is the property that $\text{for all
positive }x,y, \text{ if } x<y \text{ then } \frac{1}{y}<\frac{1}{x}$.
The Transfer Principle applies to formulas like that characterizing
the continuity of a function~$f\colon \R \to \R$ at a point~$c \in
\R$:
\[
(\forall\varepsilon > 0)(\exists\delta > 0)(\forall
x)\big[|x-c|<\delta \;\Rightarrow\; |f(x)-f(c)|<\varepsilon\big];
\]
namely, formulas that quantify over \emph{elements} of the field.  An
element~$u\in\astr$ is called \emph{finite} if~$-r<u<r$ for a
suitable~$r\in\R$.  Let~$\hr\subseteq\astr$ be the subring consisting
of finite elements of~$\astr$.  There exists a
function~$\st\colon\hr\to\R$ called \emph{the standard part}
(sometimes referred to as the \emph{shadow}) that rounds off each
finite hyperreal~$u$ to its nearest real number~$u_0\in\R$, so
that~$u_0=\st(u)$ and~$u\approx u_0$, where~$a\approx b$ is the
relation of infinite proximity (i.e., $a-b$ is infinitesimal).

\subsection{Extension}
\label{s52}

The hyperreal extension $\R\hookrightarrow\astr$ is the only modern
theory of infinitesimals that satisfies the Klein--Fraenkel criterion.
Here~$\astr$ can be obtained as the quotient of the ring of sequences
$\R^\N$ by a suitable maximal ideal.  The fact that it satisfies the
criterion is due to the transfer principle.  In this sense, the
transfer principle can be said to be a ``powerful new principle of
reasoning''.  Note that $\astr$ is not Dedekind-complete.

One could object that the classical form of the MVT is not a key
result in modern analysis.  Thus, in Lars H\"ormander's theory of
partial differential operators \cite[p.~12--13]{Ho}, a key role is
played by various multivariate generalisations of the following Taylor
(integral) remainder formula:
\begin{equation}
\label{71}
f(b)=f(a)+(b-a)f'(a)+\int_a^b (b-x)f''(x) dx.
\end{equation}
Denoting by~$\mathcal{D}$ the differentiation operator and by
$\mathcal{I}=\mathcal{I}(f,a,b)$ the definite integration operator, we
can state~\eqref{71} in the following more detailed form for a
function~$f$:
\begin{equation}
\label{72}
\begin{aligned}
(\forall & a\in\R) (\forall b\in\R) \\ & f(b)
=f(a)+(b-a)(\mathcal{D}f)(a)+ \mathcal{I} \left(
(b-x)(\mathcal{D}^2_{\phantom{I}} f),a,b \right)
\end{aligned}
\end{equation}
Applying the transfer principle to the elementary formula~\eqref{72},
we obtain
\begin{equation}
\label{73}
\begin{aligned}
(\forall & a\in\astr) (\forall b\in\astr) \\ &
\astf(b)=\astf(a)+(b-a)({}^*\mathcal{D}\;\astf)(a)+ {}^*\mathcal{I}
\left( (b-x)({}^*\mathcal{D}^2_{\phantom{I}}\; \astf),a,b \right)
\end{aligned}
\end{equation}
for the natural hyperreal extension~$\astf$ of~$f$.  The
formula~\eqref{73} is valid on every hyperreal interval of~$\astr$.
Multivariate generalisations of~\eqref{71} can be handled similarly.

\subsection{Mean Value Theorem}

We have focused on the MVT (and its generalisations) because,
historically speaking, it was emphasized by Klein and Fraenkel.  The
transfer principle applies far more broadly, as can be readily guessed
from the above.  The mean value theorem is immediate from Rolle's
theorem, which in turn follows from the extreme value theorem.  For
the sake of completeness we include a proof of the extreme value
theorem exploiting infinitesimals; see Robinson \cite[p.\;70,
Theorem\;3.4.13]{Ro66}.

\begin{theorem}
A continuous function~$f$ on~$[0,1]\subseteq\R$ has a maximum.
\end{theorem}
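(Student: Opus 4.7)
The plan is to use a hyperfinite discretisation of~$[0,1]$, extract a maximum at some internal index by transfer, and then descend back to~$\R$ via the standard part map, invoking continuity to verify the real-valued maximum property.

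To begin, I would fix an infinite hypernatural~$N\in\astn$ and consider the hyperfinite sequence of partition points~$x_k = k/N$ for internal~$k$ with~$0\le k\le N$.  These points lie in the hyperreal interval~$[0,1]\subseteq\astr$ and subdivide it into~$N$ subintervals of infinitesimal length~$1/N$.  The internal hyperfinite sequence $\bigl(\astf(x_k)\bigr)_{0\le k\le N}$ is well-defined because~$\astf$ is the natural extension of~$f$ supplied by the transfer framework of Section~\ref{s51}.

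Next, I would apply the Transfer Principle to the elementary first-order statement asserting that, for every~$n\in\N$ and every function~$\varphi\colon\{0,\ldots,n\}\to\R$, there exists an index~$k\le n$ with~$\varphi(k)\ge\varphi(j)$ for all~$j\le n$.  Transferring to~$\astr$ produces a hyperinteger~$K$ with~$0\le K\le N$ such that~$\astf(x_K)\ge\astf(x_k)$ for every hyperinteger~$0\le k\le N$.  Setting~$c=\st(x_K)$, the inequality $0\le x_K\le 1$ places~$c$ in~$[0,1]$.

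To verify that~$f(c)$ dominates~$f(y)$ for arbitrary~$y\in[0,1]$, I would pick a hyperinteger~$k$ with~$x_k\approx y$ (for instance, the largest~$k\le N$ with~$x_k\le y$, whose existence follows by transfer from the well-ordering of~$\N$).  By the nonstandard characterisation of continuity at~$y$ and at~$c$ obtained from transferring the~$\varepsilon$--$\delta$ formula of Section~\ref{s51}, we get~$\astf(x_k)\approx f(y)$ and~$\astf(x_K)\approx f(c)$.  Standard parts preserve non-strict inequalities, so the hyperfinite inequality~$\astf(x_K)\ge\astf(x_k)$ descends to the real inequality~$f(c)\ge f(y)$.

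The main obstacle is conceptual rather than computational: one must ensure that the quantities to which transfer is applied (the sequence~$k\mapsto\astf(x_k)$ and the index~$K$) are genuinely \emph{internal} objects of the nonstandard universe, so that the first-order assertion about maxima of finite sequences legitimately transfers to the hyperfinite setting.  Once this point is secured, the descent via~$\st$ and nonstandard continuity is entirely routine.
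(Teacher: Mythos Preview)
Your proposal is correct and follows essentially the same route as the paper's proof: hyperfinite partition of~$[0,1]$ into~$N$ (the paper's~$H$) pieces, transfer of the finite-maximum principle to obtain a maximising index~$K$ (the paper's~$i_0$), and descent via~$\st$ together with the nonstandard continuity criterion.  Your explicit attention to the internality of the sequence~$k\mapsto\astf(x_k)$ and of~$K$ is a welcome refinement, but the overall architecture, the key lemma (transfer of the finite-maximum statement), and the use of continuity to pass from~$\astf(x_K)\ge\astf(x_k)$ to~$f(c)\ge f(y)$ match the paper exactly.
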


\begin{proof}
The idea is to exploit a partition into infinitesimal subintervals,
pick a partition point $x_{i_0}$ where the value of the function is
maximal, and take the \emph{shadow} (see below) of $x_{i_0}$ to obtain
the maximum.

In more detail, choose infinite hypernatural
number~$H\in\astn\setminus\N$.  The real interval~$[0, 1]$ has a
natural hyperreal extension ${}^\ast\hspace{-1.5pt}[0,1] =
\{x\in\astr\colon 0\leq x\leq1\}$.  Consider its partition into~$H$
subintervals of equal infinitesimal length~$\tfrac{1}{H}$, with
partition points $x_i = \tfrac{i}{H}, \quad i=0,\ldots,H$.  The
function~$f$ has a natural extension~$\astf$ defined on the hyperreals
between~$0$ and~$1$.  Among finitely many points, one can always pick
a maximal value: $(\forall n\in \N) \; (\exists i_0\leq n) \; (
\forall i\leq n ) \left( f(x_{i_0}) \geq f(x_i) \right)$.  By transfer
we obtain
\begin{equation}
\label{41}
(\forall n \in \astn) \; ( \exists i_0\leq n ) \; ( \forall i\leq n )
\; \left( \astf(x_{i_0}) \geq \astf(x_i) \right),
\end{equation}
where~$\astn$ is the collection of hypernatural numbers.
Applying~\eqref{41} to~$n=H\in \astn \setminus\N$, we see that there
is a hypernatural~$i_0$ such that~$0\leq i_0\leq H$ and
\begin{equation}
\label{42}
(\forall i \in \astn) \big[i\leq H \Longrightarrow \astf(x_{i_0})\geq
\astf(x_i)\big].
\end{equation}
Consider the real point $c=\st(x_{i_0})$ where \st{} is the
\emph{standard part function}; see Section~\ref{s51}.
Then~$c\in[0,1]$.  By continuity of~$f$ at $c\in\R$, we
have~$\astf(x_{i_0})\approx \astf(c)=f(c)$, and therefore
$\st\left(\astf(x_{i_0})\right)= \astf\left(\st(x_{i_0})\right)=f(c)$.
An arbitrary real point~$x$ lies in an appropriate sub-interval of the
partition, namely~$x\in[x_i,x_{i+1}]$, so that $\st(x_i)=x$, or
$x_i\approx x$.  Applying the function \st{} to the inequality in
formula~\eqref{42}, we
obtain~$\st(\astf(x_{i_0}))\geq\st(\astf(x_i))$.  Hence~$f(c) \geq
f(x)$, for all real~$x$, proving~$c$ to be a maximum of~$f$ (and by
transfer, of~$\astf$ as well).
\end{proof}
The partition into infinitesimal subintervals (used in the proof of
the extreme value theorem) similarly enables one to define the
definite integral as the shadow of an infinite Riemann sum, fulfilling
Fraenkel's \emph{second} requirement, as well; see Section~\ref{s3}.

The difficulty of the Klein--Fraenkel challenge was that it required a
change in foundational thinking, as we illustrated.

\section{Acknowledgments}

V. Kanovei was partially supported by the RFBR grant no.\;17-01-00705.
M.\;Katz was partially supported by the Israel Science Foundation
grant no.\;1517/12.

\end{document}